\newcommand\xleftrightarrow[2][]{%
  \ext@arrow 9999{\longleftrightarrowfill@}{#1}{#2}}
\newcommand\longleftrightarrowfill@{%
  \arrowfill@\leftarrow\relbar\rightarrow}
\theoremstyle{plain}
\newtheorem{theorem}{Theorem}
\newtheorem{definition}[theorem]{Definition}
\newtheorem{lemma}[theorem]{Lemma}
\newtheorem{question}[theorem]{Question}
\newcommand\NN{{\mathbb N}}
\newcommand\PP{{\mathbb P}}
\renewcommand\ell{l}
\newcounter{mycount}
\numberwithin{equation}{section}
\numberwithin{theorem}{section}
\numberwithin{figure}{section}
\title{Critical site percolation and cutsets}
\author{Zhongyang Li}
\begin{document}
\maketitle

\begin{abstract}In 2003, Kahn conjectured a characterization of the critical percolation probability $p_c$
  in terms of vertex cut sets (\cite{JK03}). Later, Lyons and Peres (2016) conjectured a similar characterization of $p_c$
 , but in terms of edge cut sets (\cite{LP16}). Both conjectures were subsequently proven by Tang (\cite{pt23}) for bond percolation and site percolation on bounded-degree graphs. Tang further conjectured that Kahn's vertex-cut characterization for 
$p_c^{site}$
  and the Lyons-Peres edge-cut characterization for 
$p_c^{site}$ would hold for site percolation on any infinite, connected, locally finite graph.

In this paper, we establish Kahn's vertex-cut characterization for 
$p_c^{site}$ by adapting arguments from \cite{jmh57a, DCT15}. Additionally, we disprove the Lyons-Peres edge-cut characterization for $p_c^{site}$
by constructing a counterexample.
\end{abstract}

\section{Introduction}

Introduced by Broadbent and Hammersley in 1957 (see \cite{BH57}) to study the random spread of a fluid through a medium, percolation has been a celebrated model illustrating the phase transition, magnetization, or the spread of pandemic diseases; see \cite{grgP,LP16,HDC18} for recent accounts of the theory.

Let $G=(V,E)$ be a graph. We write $e=\langle u,v\rangle$ for an edge with endpoints $u$ and $v$; where $u,v\in V$ and $e\in E$.
The (\emph{vertex}-)\emph{degree} of a vertex $v\in V$ is the number 
of edges incident to $v$; i.e.~edges one of whose endpoints is $v$.  We say a graph is locally finite if each vertex has finite degree.

 Assume $G=(V,E)$ is an infinite, locally finite, connected graph. 
A site percolation configuration $\omega\in \{0,1\}^{V}$  is a an assignment to each vertex in $G$ of either state 0 or state 1. A cluster in $\omega$ is a maximal connected set of vertices in which each vertex has the same state in $\omega$. A cluster may be a 0-cluster or a 1-cluster depending on the common state of vertices in the cluster.  A cluster may be finite or infinite depending on the total number of vertices in it. We say that percolation occurs in $\omega$ if there exists an infinite 1-cluster in $\omega$. 

Of particular interest is the i.i.d.~Bernoulli site percolation on a graph. In such a model, an independent Bernoulli random variable, which takes value 1 with probability $p\in [0,1]$, is associated to each vertex. We use $\PP_p$ to denote the probability measure for i.i.d.~Bernoulli($p$) site percolation on $G$.

For the i.i.d.~Bernoulli site percolation, define
\begin{small}
\begin{align}
p_c^{site}(G):&=&\inf\{p\in[0,1]: \mathrm{Bernoulli}(p)\ \mathrm{site\ percolation\ on}\ G\ \mathrm{has\ an\ infinite\ 1-cluster\ a.s.} \}\label{dpc}
\end{align}
\end{small}

Determining the exact value of 
$p_c$
  has attracted considerable interest among probabilists. Although the value of 
$p_c$ is known for certain specific graphs (see e.g. \cite{HK80,GM12}), it remains unknown for the vast majority of general graphs, including many transitive graphs. Significant efforts have been made to find exact characterizations of 
$p_c$, such as descriptions of 
$p_c$ based on other properties of the graph. For example, it was proved in \cite{ry90} that for an infinite tree, $p_c$ is equal to the reciprocal of the average branching number of the tree.

In 2003, Kahn conjectured a characterization of critical percolation probability $p_c$ in terms of vertex  cut sets (\cite{JK03}); followed by a conjecture of a characterization of critical percolation probability in terms of edge cut sets by Lyons and Peres in 2016 (\cite{LP16}). Both conjectures were proved by Tang (\cite{pt23}) for bond percolation and site percolation on bounded degree graphs. Tang also conjectured that both Kahn's vertex-cut characterization for $p_c^{site}$ and Lyons-Peres edge-cut characterization for $p_c^{site}$ hold for site percolation on any infinite connected locally finite graphs. Indeed, site percolation is more general than bond percolation in the sense that the Bernoulli bond percolation on every graph is Bernoulli site percolation on a different graph but not vice versa. In this paper, we prove the Kahn's vertex-cut characterization for $p_c^{site}$ and disprove Lyons-Peres edge-cut characterization for $p_c^{site}$.

In a powerful refinement of a classical argument by Hammersley \cite{jmh57a}, Duminil-Copin and Tassion \cite{DCT15} showed that for transitive graphs 
$G$, the critical value 
$p_c(G)$ can be characterized in terms of the mean number of points on the surface of a box that are connected to its root. This paper extends that work in \cite{jmh57a,DCT15} to general locally finite graphs without assuming transitivity or quasi-transitivity, using the technique of differential inequalities (\cite{AB87}); based on which we prove Kahn's conjecture.

One generalization of the arguments in \cite{jmh57a,DCT15} was obtained for site percolation on infinite, connected, bounded degree graphs in \cite{pt23}; the major difference from \cite{pt23} is our technique here does not require the graph to have bounded degree, but only require the graph to be locally finite. Although \cite{DCT15} was written in the context of bond percolation, Sect. 1.2 of \cite{DCT15} mentions that the proof may be adapted to site percolation on transitive graphs and suggests a possible definition of $\varphi_p(S)$ for site percolation. The paper \cite{pt23} adopts this suggested definition, which imposes a bounded degree constraint. In this paper, we introduce a different definition of $\varphi_p(S)$: instead of counting the expected number of boundary vertices connected to the center, as in \cite{DCT15, pt23}, we count the expected number of boundary vertices that have a connected neighbor to the center. This new definition allows us to remove the bounded degree constraint. This generalization to locally finite graphs was also used to prove the existence of infinitely many infinite clusters on planar graphs with vertex degree at least 7 (see \cite{ZL23,ZL232}), resolving a conjecture of Benjamini and Schramm in 1996 (\cite{bs96}).

For the Lyons-Peres edge-cut characterization for $p_c^{site}$, it appears that the bounded degree assumption for the underlying graph is critical. Indeed, we shall show that this characterization for $p_c^{site}$ does not hold for all the infinite connected locally finite graphs by constructing a counter example.

The organization of the paper is as follows: in Section \ref{sect:vc}, we prove the Kahn's vertex-cut characterization for $p_c^{site}$ on all infinite connected, locally finite graphs (Theorem \ref{t03}).
In Section \ref{sect:ec}, we prove that the Lyons-Peres edge-cut characterization for $p_c^{site}$ does not hold all infinite connected, locally finite graphs (Theorem \ref{t34}).

\section{Vertex Cut}\label{sect:vc}

In this section, we prove the Kahn's vertex-cut characterization for $p_c^{site}$ on all infinite connected, locally finite graphs.
The proof is based on a generalization of the arguments in \cite{jmh57a,DCT15} to site percolation on infinite connected, locally finite graphs. The main theorem proved in this section is Theorem \ref{t03}.

\begin{definition}\label{df21}Suppose $G=(V,E)$ is an infinite, connected locally finite graph. For $x\in V$, a vertex cutset $\Pi_V\subset V$ separating $x$ from $\infty$ is a subset of vertices such that any singly infinite path starting from $x$ must occupy at least one vertex in $\Pi_{V}$.

Let $\Pi_V$ be a vertex cut set which separates $x$ from $\infty$. For each $v\in \Pi_V$, let $A(x,v,\Pi_V)$ denote the event that $x$ is connected to $v$ by an open path without using any vertices in $\Pi_V\setminus \{v\}$. Define
\begin{align*}
p'_{cut,V}:=p'_{cut,V}(G):=\sup\left\{p\geq 0:\inf_{\Pi_V}\sum_{v\in \Pi_V}\mathbb{P}_p[A(x,v,\Pi_V)]=0\right\}.
\end{align*}
where the infimum is taken over all vertex cutsets $\Pi_V$ separating $x$ from $\infty$.
\end{definition}

Note that the definition of $p'_{cut,V}$ above does not depend on the choice of the vertex $x\in V(G)$.

\begin{question}\label{q01}(Question 3.1 in \cite{JK03})Does $p_c(G)=p_{cut,V}'(G)$ hold for all $G$?
\end{question}

Question \ref{q01} was answered positively in \cite{pt23} for Bernoulli bond percolation and for Bernoulli site percolation with bounded degree. Here we answer this question positively for Bernoulli site percolation without the bounded degree assumption.

\begin{theorem}\label{t03}For Bernoulli site percolation on every locally finite, connected, infinite graph $G$, one has that
\begin{align*}
p_{cut,V}'=p_c^{site}.
\end{align*}
\end{theorem}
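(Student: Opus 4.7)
The plan is to prove the two inequalities $p'_{cut,V}\le p_c^{site}$ and $p_c^{site}\le p'_{cut,V}$ separately. The first direction is straightforward: fix a vertex $x$ and any cut set $\Pi_V$ separating $x$ from $\infty$. Every infinite open path from $x$ must cross $\Pi_V$, so conditioning on the first vertex $v\in\Pi_V$ encountered gives the union bound
\begin{equation*}
\mathbb{P}_p[x\leftrightarrow\infty]\le\sum_{v\in\Pi_V}\mathbb{P}_p[A(x,v,\Pi_V)].
\end{equation*}
Taking the infimum over $\Pi_V$ shows that $\theta(p)=0$ whenever $p<p'_{cut,V}$, which yields $p'_{cut,V}\le p_c^{site}$.

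For the reverse inequality I would adapt the Hammersley--Duminil-Copin--Tassion framework \cite{jmh57a,DCT15}. Following the scheme sketched in the introduction, for each finite $S\subset V$ containing $x$, define
\begin{equation*}
\varphi_p(S):=\sum_{v\in\partial_V S}\mathbb{P}_p\bigl[\exists\,u\in S,\ \langle u,v\rangle\in E,\ x\xleftrightarrow{S} u\bigr],
\end{equation*}
where $\partial_V S$ is the external vertex boundary and $\{x\xleftrightarrow{S} u\}$ is the event that $x$ and $u$ lie in the same open cluster of the restriction to $S$. Set
\begin{equation*}
\wt{p}_c:=\sup\bigl\{p\in[0,1]:\varphi_p(S)<1\text{ for some finite }S\ni x\bigr\}.
\end{equation*}
The heart of the proof is to establish $\wt{p}_c=p_c^{site}$. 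For the upper bound $\wt{p}_c\le p_c^{site}$, given a finite $S$ with $\varphi_p(S)<1$, I would use a Hammersley-type gluing argument: explore the open cluster of $x$ in $S$ and iterate at each boundary vertex, producing exponential decay of the radius of the cluster. For the lower bound $\wt{p}_c\ge p_c^{site}$, I would imitate the differential-inequality argument of \cite{DCT15,AB87}: for $p>\wt{p}_c$ one has $\varphi_p(S)\ge 1$ for every finite $S\ni x$, and plugging this into a Russo-type differentiation of the finite-volume percolation probability $\theta_n(p):=\mathbb{P}_p[x\leftrightarrow\partial B_n(x)]$ yields an inequality of the form $\theta_n'(p)\ge c(1-\theta_n(p))$; integrating over a supercritical window then forces $\theta(p)>0$.

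Granted $\wt{p}_c=p_c^{site}$, the reverse cut-set inequality follows as follows. For $p<p_c^{site}$, pick a finite $S\ni x$ with $\varphi_p(S)<1$. The external boundary $\Pi_V:=\partial_V S$ is a vertex cut set separating $x$ from $\infty$, and since the event $A(x,v,\Pi_V)$ entails the existence of an open $u\in S$ neighboring $v$ with $x\xleftrightarrow{S} u$, one has $\sum_{v\in\Pi_V}\mathbb{P}_p[A(x,v,\Pi_V)]\le\varphi_p(S)<1$. To drive the cut-sum to zero, iterate: at each boundary vertex choose a further finite set with $\varphi_p<1$ and glue it on, obtaining after $k$ iterations a cut set whose cut-sum is dominated by a product of factors $\varphi_p(S_i)<1$ and hence decays to zero.

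The main technical obstacle I expect is the differential-inequality step in the absence of a bounded-degree hypothesis. The derivations in \cite{DCT15,pt23} use bounded degree to control, for each pivotal boundary vertex, the cost of opening its incident edges; the new definition of $\varphi_p(S)$, which counts boundary vertices with an open connected neighbor rather than the boundary vertices themselves, is designed precisely to sidestep this, but one must verify that the Russo derivative of $\theta_n(p)$ produces exactly $\varphi_p(S)$ and not a larger degree-weighted quantity. A secondary subtlety is that without transitivity one cannot translate $S$, so the iteration in the cut-set construction must be performed vertex-by-vertex with a possibly different finite set $S_v$ at each boundary vertex $v$, requiring care that the nested restart events remain independent, or at least FKG-comparable, so that the geometric decay of the cut-sum survives.
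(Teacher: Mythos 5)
Your first direction---the union bound $\mathbb{P}_p[x\leftrightarrow\infty]\le\sum_{v\in\Pi_V}\mathbb{P}_p[A(x,v,\Pi_V)]$ giving $p'_{cut,V}\le p_c^{site}$---matches the paper's Lemma~\ref{l04}. The second direction, however, is argued by a genuinely different route than the paper's, and in its present form has two concrete gaps.

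The paper proves $p_c^{site}\le p'_{cut,V}$ by contraposition. Fix $p>p'_{cut,V}$; then the cut-sums are bounded below by some $\epsilon>0$ uniformly over cut sets. Observe that for any finite $S\ni x$ with $x\in S^\circ$, the inner boundary $\{y\in S:\partial_V y\cap S^c\neq\emptyset\}$ is itself a vertex cut set separating $x$ from $\infty$, and each term $\mathbb{P}_p(x\xleftrightarrow{S^{\circ}}\partial_V y)$ dominates $\mathbb{P}_p[A(x,y,\Pi_V)]$; hence $\inf_S\varphi_p^x(S)\ge\epsilon$. Feeding this into the Russo-formula differential inequality of Lemma~\ref{l72} and integrating immediately gives $\mathbb{P}_p(x\leftrightarrow\infty)>0$, so $p\ge p_c^{site}$. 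No iterated cut-set construction is needed: one uniform lower bound on $\varphi_p$, plus the differential inequality, finishes the job. (Also note the paper's $\varphi_p^v(S)$ sums over the \emph{inner} boundary of $S$, whereas yours sums over the outer boundary $\partial_V S$; both avoid degree-weighting, but the inner-boundary version is what lines up directly against the cut-sum in the paper's argument.)

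Your plan instead aims to exhibit, for each $p<p_c^{site}$, cut sets whose cut-sums vanish. That is a harder direct statement, and two repairs are required. First, your definition of $\wt{p}_c$ only demands $\varphi_p(S)<1$ for \emph{some} finite $S$ containing the fixed base vertex $x$. For the iterated product to decay you need the uniformity the paper builds into $\wt{p}_c$: a single $\epsilon_0>0$ such that \emph{every} $v\in V$ admits a finite $S_v$ with $v\in S_v^\circ$ and $\varphi_p^v(S_v)\le 1-\epsilon_0$. Without this, the factors can creep to $1$ along the iteration and the cut-sum need not go to zero. Second, the gluing step is not an FKG application: FKG bounds probabilities of intersections of increasing events from below, which is the wrong direction here. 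What the iteration needs is a disjoint-occurrence (van den Berg--Kesten type) factorization, showing that a connection from $x$ to a second-generation boundary vertex splits as a disjoint occurrence of a connection to the first boundary and a restart from there; this is exactly the content of the paper's Lemma~\ref{l73}, used there to prove $\mathbb{P}_{p'}(v\leftrightarrow\infty)=0$ for $p'<\wt{p}_c$. With these two fixes your iteration does go through, but it is considerably more machinery than the paper's single-step contrapositive through Russo's formula.
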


\begin{lemma}\label{l04}For Bernoulli site percolation on every locally finite, connected, infinite graph $G$,
\begin{align}
p_{cut,V}'\leq p_c^{site}\label{llb}
\end{align}
\end{lemma}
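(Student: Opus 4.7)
The plan is to prove the contrapositive: whenever $p>p_c^{site}$ I will exhibit a strictly positive lower bound, independent of the cutset $\Pi_V$, on $\sum_{v\in\Pi_V}\PP_p[A(x,v,\Pi_V)]$. Since $p\mapsto\PP_p[A(x,v,\Pi_V)]$ is nondecreasing by the standard monotone coupling of i.i.d.\ Bernoulli site percolation, the set of $p$ for which $\inf_{\Pi_V}\sum_{v}\PP_p[A(x,v,\Pi_V)]$ vanishes is downward closed in $[0,1]$, so ruling out every $p>p_c^{site}$ from it forces $p'_{cut,V}\leq p_c^{site}$.

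The uniform lower bound I will use is $\theta_x(p):=\PP_p[x\leftrightarrow\infty]$. The first step is to verify that $\theta_x(p)>0$ for every $x\in V$ whenever $p>p_c^{site}$. By definition of $p_c^{site}$, at such $p$ an infinite $1$-cluster exists almost surely, and since $\{\exists\text{ infinite 1-cluster}\}=\bigcup_{y\in V}\{y\leftrightarrow\infty\}$, countable subadditivity forces $\theta_y(p)>0$ for some $y\in V$. Because $G$ is connected I fix a finite path $x=u_0,u_1,\ldots,u_m=y$ and apply Harris--FKG to the increasing events $E_1=\{u_0,\ldots,u_m\text{ are all open}\}$ and $E_2=\{y\leftrightarrow\infty\}$:
\[
\PP_p[E_1\cap E_2]\;\geq\; p^{m+1}\,\theta_y(p)\;>\;0.
\]
On $E_1\cap E_2$ the open path $u_0,\ldots,u_m$ joins $x$ to $y$ and $y$ lies in an infinite open $1$-cluster, so $x\leftrightarrow\infty$; hence $\theta_x(p)>0$.

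The second step is a first-hit union bound. For any vertex cutset $\Pi_V$ separating $x$ from $\infty$, on the event $\{x\leftrightarrow\infty\}$ choose an infinite open self-avoiding path starting at $x$; since $\Pi_V$ is a cutset this path must meet $\Pi_V$, and its initial segment up to the first hit $v_k\in\Pi_V$ is an open path from $x$ to $v_k$ avoiding $\Pi_V\setminus\{v_k\}$, which witnesses the event $A(x,v_k,\Pi_V)$. Hence
\[
\{x\leftrightarrow\infty\}\;\subseteq\;\bigcup_{v\in\Pi_V}A(x,v,\Pi_V),
\]
and a union bound yields $\sum_{v\in\Pi_V}\PP_p[A(x,v,\Pi_V)]\geq\theta_x(p)$ uniformly in $\Pi_V$, completing the proof of \eqref{llb}.

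The only mildly delicate point is Step 1: without a transitivity or quasi-transitivity assumption, the a.s.\ existence of \emph{some} infinite $1$-cluster does not immediately yield positivity of the connection probability at our specific basepoint $x$, so one has to transfer positivity from an unknown $y$ to $x$ via FKG and a finite connecting path. Everything else reduces to a routine first-hit union bound, and no differential inequalities (which will be needed for the converse $p_c^{site}\leq p'_{cut,V}$) enter here.
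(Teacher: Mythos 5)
Your proof is correct and takes essentially the same approach as the paper: the core is the first-hit inclusion $\{x\leftrightarrow\infty\}\subseteq\bigcup_{v\in\Pi_V}A(x,v,\Pi_V)$, yielding $\PP_p(x\leftrightarrow\infty)\leq\inf_{\Pi_V}\sum_{v\in\Pi_V}\PP_p[A(x,v,\Pi_V)]$, followed by a comparison of the two definitions. The only difference is that you spell out, via Harris--FKG and a finite connecting path, why $p>p_c^{site}$ forces $\PP_p(x\leftrightarrow\infty)>0$ at the fixed basepoint $x$; the paper's one-line proof leaves this implicit, but it is genuinely needed since $p_c^{site}$ is defined through a.s.\ existence of an infinite cluster rather than through positivity of the connection probability at a specific vertex.
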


\begin{proof}Note that
\begin{align*}
\mathbb{P}_p(x\leftrightarrow\infty)\leq\inf\sum_{v\in \Pi_V}\mathbb{P}_p(A(x,v,\Pi_E))
\end{align*}

Then (\ref{llb}) follows from the definitions of $p_{cut,V}'$,  and $p_c^{site}$.
\end{proof}

Before proving Theorem \ref{t03}, we recall a few lemmas proved in \cite{ZL23}.

Let $G=(V,E)$ be a graph. For each $p\in (0,1)$, let $\mathbb{P}_p$ be the probability measure of the i.i.d.~Bernoulli($p$) site percolation on $G$.
For each $S\subset V$, let $S^{\circ}$ consist of all the interior vertices of $S$, i.e., vertices all of whose neighbors are in $S$ as well.
For each  $S\subseteq V$, $v\in S$, define
\begin{align*}
\varphi_p^{v}(S):=\begin{cases}\sum_{y\in S:[\partial_V y]\cap S^c\neq\emptyset}\mathbb{P}_p(v\xleftrightarrow{S^{\circ}} \partial_V y)&\mathrm{if}\ v\in S^{\circ}\\
1&\mathrm{if}\ v\in S\setminus S^{\circ}
\end{cases}
\end{align*}
where 
\begin{itemize}
\item $v\xleftrightarrow{S^{\circ}} x$ is the event that the vertex $v$ is joined to the vertex $x$ by an open path visiting only interior vertices in $S$;
\item let $A\subseteq V$; $v\xleftrightarrow{S^{\circ}} A$ if and only if there exists $x\in A$ such that $v\xleftrightarrow{S^{\circ}} x$;
\item $\partial_V y$ consists of all the vertices adjacent to $y$.
\end{itemize}

\begin{lemma}\label{l71}Let $G=(V,E)$ be an infinite, connected, locally finite graph. The critical site percolation probability on $G$ is given by
\begin{align*}
    \tilde{p}_c=\sup\{p\geq 0:\exists \epsilon_0>0, \mathrm{s.t.}\forall v\in V, \exists S_v\subseteq V\ \mathrm{satisfying}\ |S_v|<\infty\ \mathrm{and}\ v\in S_v^{\circ}, \varphi_p^{v}(S_v)\leq 1-\epsilon_0\}
\end{align*}
Moreover
\begin{enumerate}
    \item If $p>\tilde{p}_c$, a.s.~there exists an infinite 1-cluster; moreover, for any $\epsilon>0$ there exists a vertex $w$, satisfying 
    \begin{align}
    \mathbb{P}_p(w\leftrightarrow \infty)\geq 1-\left(\frac{1-p}{1-\tilde{p}_c}\right)^{1-\epsilon}\label{lbc}
    \end{align}
    \item If $p<\tilde{p}_c$, then for any vertex $v\in V$
    \begin{align}
        \mathbb{P}_p(v\leftrightarrow \infty)=0.\label{lbc2}
    \end{align}
\end{enumerate}
In particular, (1) and (2) implies that $p_c^{site}(G)=\tilde{p}_c$
\end{lemma}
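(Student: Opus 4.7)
The plan is to establish the equivalence $p_c^{site}(G) = \tilde{p}_c$ by proving its two directions separately as parts (2) and (1) of the statement, each via a controlled exploration of the cluster of a fixed vertex through the finite sets $S_v$ supplied by the definition of $\tilde{p}_c$.

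For part (2), I would fix $p < \tilde{p}_c$ and the corresponding $\epsilon_0 > 0$ for which every vertex $v$ admits a finite $S_v$ with $v \in S_v^\circ$ and $\varphi_p^v(S_v) \leq 1 - \epsilon_0$. Starting from $v_0$, reveal the cluster of $v_0$ consisting of open interior vertices of $S_{v_0}$. For $v_0$ to be connected to $\infty$, this cluster must reach some $x \in \partial_V y$ for a boundary vertex $y \in S_{v_0}$ with a neighbor outside $S_{v_0}$, and then the infinite path must continue past $y$. The expected number of such $y$ is at most $\varphi_p^{v_0}(S_{v_0}) \leq 1 - \epsilon_0$. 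Iterating at each such $y$ (with its own $S_y$) and using a BK-type argument, together with bookkeeping that ensures each exploration step uses disjoint sets of random variables, yields an expected number of successful $n$-step escapes bounded by $(1 - \epsilon_0)^n$, and hence $\mathbb{P}_p(v_0 \leftrightarrow \infty) = 0$.

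For part (1), I would fix $p > \tilde{p}_c$ together with small $\epsilon > 0$, and use the negation of the supremum condition to obtain, for any sufficiently small $\epsilon_0$, a vertex $w$ such that $\varphi_p^w(S) > 1 - \epsilon_0$ for every finite $S \ni w$ with $w \in S^\circ$. Following Duminil-Copin--Tassion adapted to site percolation, set $\theta_n(p) := \mathbb{P}_p(w \leftrightarrow V \setminus B_n(w))$, where $B_n(w)$ denotes the combinatorial ball of radius $n$ around $w$. Russo's formula expresses $\theta_n'(p)$ as a sum over pivotal vertices. Exploring the cluster of $w$ inside an optimal $S$ matches each pivotal for $\{w \leftrightarrow V \setminus B_n(w)\}$ with an exit pair $(y, x)$ contributing to $\varphi_p^w(S)$; lower bounding that sum schematically yields a differential inequality of the form
\begin{align*}
\theta_n'(p) \geq \frac{(1 - \epsilon)\bigl(1 - \theta_n(p)\bigr)}{1-p}.
\end{align*}
Integrating from $\tilde{p}_c$ to $p$ and sending $n \to \infty$ gives $1 - \mathbb{P}_p(w \leftrightarrow \infty) \leq ((1 - p)/(1 - \tilde{p}_c))^{1 - \epsilon}$, which is the bound (\ref{lbc}).

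The principal obstacle is the degree-free bookkeeping in part (1). In DCT and in Tang's bounded-degree site version, $\varphi_p$ counts boundary vertices themselves, and the bounded-degree assumption is used to convert \emph{connected to a neighbor of $y$} into \emph{connected to $y$} at a controlled cost. The $\varphi_p^v(S)$ redefined here, by summing $\mathbb{P}_p(v \xleftrightarrow{S^\circ} \partial_V y)$ directly over boundary-adjacent $y$, is tailored so that each pivotal in the exploration can be charged to exactly one term of $\varphi$ without any degree factor. Verifying this charging rigorously --- both in the BK iteration of part (2) and in the Russo-formula pivotal analysis of part (1) --- is the step that would require the most care.
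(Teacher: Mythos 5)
Your proposal matches the paper's own argument in both structure and substance: part (2) is proved by the same BK-type exit decomposition (the paper's Lemma \ref{l73}) iterated to give a geometric $(1-\epsilon_0)^n$ bound, and part (1) by the same Russo-formula differential inequality on $\mathbb{P}_p(w \leftrightarrow \Lambda^c)$ (the paper's Lemma \ref{l72}), integrated from an intermediate $p_1 > \tilde{p}_c$ chosen to absorb the $\epsilon$ slack. You also correctly identify the key point that the modified $\varphi_p^v(S)$, which sums $\mathbb{P}_p(v \xleftrightarrow{S^\circ} \partial_V y)$ over boundary-adjacent $y$, is what lets each pivotal be charged to a single term without a degree factor.
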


\begin{lemma}\label{l72}Let $G=(V,E)$ be an infinite, connected, locally finite graph. For all $p>0$, $\Lambda\subset V$ finite and $v\in \Lambda$ 
\begin{align*}
\frac{d}{dp}\mathbb{P}_p(v\leftrightarrow \Lambda^c)\geq \frac{1}{1-p}
\left[ \mathrm{inf}_{S:v\in S,|S|<\infty}\varphi_p^v(S)\right]
\left(1-\mathbb{P}_p(v\leftrightarrow \Lambda^{c})\right)
\end{align*}
\end{lemma}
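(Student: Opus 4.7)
The plan is to establish this differential inequality in the spirit of Duminil-Copin--Tassion \cite{DCT15}: use Russo's formula to express the derivative as a sum of pivotal probabilities, then lower bound this sum via an exploration that reveals a canonical finite random set $S\ni v$. Because the event $\{u\text{ is pivotal for }v\leftrightarrow\Lambda^c\}$ is measurable with respect to $\omega|_{V\setminus\{u\}}$, it is independent of $\omega_u$, so Russo's formula yields
\begin{align*}
\frac{d}{dp}\mathbb{P}_p(v\leftrightarrow\Lambda^c)=\sum_{u\in V}\mathbb{P}_p(u\text{ pivotal})=\frac{1}{1-p}\sum_{u\in V}\mathbb{P}_p(u\text{ pivotal},\,\omega_u=0);
\end{align*}
as the target event is increasing, this sum is supported on $\{v\not\leftrightarrow\Lambda^c\}$.

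On $\{v\not\leftrightarrow\Lambda^c\}$ I would define $S(\omega)\subseteq\Lambda$ canonically by exploring from $\Lambda^c$ inward: roughly, set $T(\omega)$ to be the union of $\Lambda^c$ with all vertices of $\Lambda$ connected to $\Lambda^c$ by an open path in $\omega$, and let $S=V\setminus T$. Then $v\in S$, $S$ is finite, and crucially $S$ is measurable with respect to $\omega|_{V\setminus S^\circ}$: the exploration inspects only the states of vertices in $T\cup(S\setminus S^\circ)$, since the vertices on the inner boundary $S\setminus S^\circ$ must be closed (an open neighbor of $T$ in $\Lambda$ would itself be absorbed into $T$), while the configuration on $S^\circ$ is never queried. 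Consequently, conditional on $\{S=S_0\}$, the restriction $\omega|_{S_0^\circ}$ is still i.i.d.\ Bernoulli$(p)$ and independent of the $\sigma$-algebra that determines $\{S=S_0\}$.

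The core step is to identify closed pivotal vertices with the inner boundary of $S$. If $u$ is closed and pivotal on $\{S=S_0\}$, then opening $u$ creates an open path from $v$ to $\Lambda^c$ through $u$, which forces $u\in S_0\setminus S_0^\circ$ (a vertex in $S_0^\circ$ has all its neighbors in $S_0$, so opening it cannot reach $T$), and the portion of the new path from $v$ to a neighbor of $u$ must lie in $S_0^\circ$ because the open cluster of $v$ in $\omega$ is contained in $S_0^\circ$ (open vertices of $S_0$ are automatically in $S_0^\circ$). Combining this identification with the conditional independence above gives
\begin{align*}
\sum_{u}\mathbb{P}_p(u\text{ pivotal},\,\omega_u=0,\,S=S_0)\geq\sum_{y\in S_0\setminus S_0^\circ}\mathbb{P}_p\bigl(v\xleftrightarrow{S_0^\circ}\partial_V y\bigr)\,\mathbb{P}_p(S=S_0)=\varphi_p^v(S_0)\,\mathbb{P}_p(S=S_0)
\end{align*}
when $v\in S_0^\circ$; in the edge case $v\in S_0\setminus S_0^\circ$ the trivial value $\varphi_p^v(S_0)=1$ can be absorbed since $v$ itself is already closed and pivotal. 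Summing over all finite $S_0\ni v$ and minorizing each term by $\inf_{S:v\in S,|S|<\infty}\varphi_p^v(S)$ gives the claimed inequality after dividing by $1-p$.

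The main technical obstacle is the exploration step: making the exploration precise enough that $S$ is genuinely a function of $\omega|_{V\setminus S^\circ}$ (so that the factorization used above is legitimate), and carefully enumerating closed pivotal vertices so that possible contributions from $u$ in $\Lambda^c$, or from degenerate configurations in which $v\in S\setminus S^\circ$, either produce additional positive terms or are absorbed into the trivial bound $\varphi_p^v(S)\le 1$. The absence of any bounded-degree hypothesis on $G$ means this bookkeeping has to be done without invoking a uniform bound on $|\partial_V y|$, which is the whole point of the new definition of $\varphi_p^v$.
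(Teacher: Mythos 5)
Your proposal is correct and follows essentially the same route as the paper: your random set $S=V\setminus T$ is exactly the paper's $\mathcal{H}=\{x\in\Lambda: x\nleftrightarrow\Lambda^c\}$, and Russo's formula, the decomposition on $\{\mathcal{H}=S_0\}$, the independence between $\omega|_{S_0^\circ}$ and the event $\{\mathcal{H}=S_0\}$, the identification of closed pivotal vertices with the inner boundary $S_0\setminus S_0^\circ$, and the edge case $v\in S_0\setminus S_0^\circ$ all match the paper's argument. Your observation that every vertex of $S\setminus S^\circ$ is closed on $\{\mathcal{H}=S\}$ (not merely the pivotal ones) is in fact the cleanest way to justify both the measurability of $\{\mathcal{H}=S\}$ with respect to $\omega|_{V\setminus S^\circ}$ and the confinement of the open cluster of $v$ to $S^\circ$ -- a step the paper states somewhat tersely.
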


\begin{proof}Define the random subset $\mathcal{H}$ of $\Lambda$:
\begin{align*}
    \mathcal{H}:=\{x\in \Lambda.\ \mathrm{s.t.}\ x\nleftrightarrow \Lambda^c\}
\end{align*}
Recall that for each $y\in V$, if $\omega^y\in \{v\leftrightarrow\Lambda^c\}$ and $\omega_y\notin \{v\leftrightarrow\Lambda^c\}$, then $y$ is pivotal for the event $v\leftrightarrow\Lambda^c$. Here for each $\omega\in \{0,1\}^{V}$,
\begin{align*}
    \omega^y(u):=\begin{cases}\omega(u);& \mathrm{If}\ u\neq y;\\ 1&\mathrm{If}\ u= y.\end{cases}\qquad
\omega_y(u):=\begin{cases}\omega(u);& \mathrm{If}\ u\neq y;\\ 0&\mathrm{If}\ u=y.\end{cases}
\end{align*}
By Russo's formula (\cite{RL81})
\begin{align}
  \frac{d}{dp}\mathbb{P}_p[v\leftrightarrow\Lambda^c]&=\sum_{y\in V}\mathbb{P}_p(y\ \mathrm{is\ pivotal\ for\ }v\leftrightarrow\Lambda^c)\label{pp1}\\
   &\geq\frac{1}{1-p} \sum_{y\in V}\mathbb{P}_p(y\ \mathrm{is\ pivotal\ for\ }v\leftrightarrow\Lambda^c\ \mathrm{and}\ v\nleftrightarrow\Lambda^c)\notag\\
   &\geq \frac{1}{1-p}\sum_{S:v\in S}\sum_{y\in V}\mathbb{P}_p(y\ \mathrm{is\ pivotal\ for\ }v\leftrightarrow\Lambda^c\ \mathrm{and}\ \mathcal{H}=S)\notag
\end{align}
Note that if $v\in S^{\circ}$
the event that $y$ is pivotal for $v\leftrightarrow\Lambda^c$ and $\mathcal{H}=S$ is nonempty only if 
\begin{itemize}
\item $y\in S$ and $y$ is adjacent to a vertex $x\notin S$; and
\item $y$ is closed; and
\item $y$ is adjacent to a vertex $z$, s.t. $z \xleftrightarrow{S^{\circ}} v$
\end{itemize}
Then for each $v\in S^{\circ}$, $y\in S$ and $\partial_V y\cap S^{c}\neq \emptyset$ we have 
\begin{align}
   \mathbb{P}_p(y\ \mathrm{is\ pivotal\ for\ }v\leftrightarrow\Lambda^c\ \mathrm{and}\ \mathcal{H}=S)
   =\mathbb{P}(\partial_V y \xleftrightarrow{S^{\circ}} v)\mathbb{P}(\mathcal{H}=S);\label{pp2}
\end{align}
since the event $\partial_V y \xleftrightarrow{S^{\circ}} v$ depends only on vertices in $S$ with all neighbors in $S$ while the event $\mathcal{H}=S$ depends only on vertices in $S$ that have at least one neighbor not in $S$.

Moreover, if $v\in S\setminus S^{\circ}$
the event that $y$ is pivotal for $v\leftrightarrow\Lambda^c$ and $\mathcal{H}=S$ is nonempty only if $y=v$. In this case for each $v\in S\setminus S^{\circ}$, $y\in S$ and $\partial_Vy \cap S^c\neq \emptyset$
\begin{align}
   \mathbb{P}_p(y\ \mathrm{is\ pivotal\ for\ }v\leftrightarrow\Lambda^c\ \mathrm{and}\ \mathcal{H}=S)
   =\mathbf{1}_{y=v}\mathbb{P}(\mathcal{H}=S);\label{pp3}
\end{align}

Plugging (\ref{pp2}) and (\ref{pp3}) into (\ref{pp1}), we obtain
\begin{align*}
 \frac{d}{dp}\mathbb{P}_p[v\leftrightarrow\Lambda^c]
 &\geq\frac{1}{1-p} \sum_{S:v\in S^{\circ}}\sum_{y\in S:\partial_V y\cap S^{c}\neq\emptyset}\mathbb{P}_p(\partial_V y \xleftrightarrow{S^{\circ}} v)\mathbb{P}_p(\mathcal{H}=S)\\
 &+\frac{1}{1-p} \sum_{S:v\in S\setminus S^{\circ}}\mathbb{P}_p(\mathcal{H}=S)\\
 &\geq\frac{1}{1-p}\left[ \mathrm{inf}_{S:v\in S,|S|<\infty}\varphi_p^v(S)\right]\sum_{S:v\in S}\mathbb{P}_p(\mathcal{H}=S).
\end{align*}
Then the lemma follows from the fact that 
\begin{align*}
    \sum_{S:v\in S}\mathbb{P}_p(\mathcal{H}=S)=\mathbb{P}_p(v\nleftrightarrow \Lambda^{c})=1-\mathbb{P}_p(v\leftrightarrow \Lambda^{c})
\end{align*}
\end{proof}

\begin{lemma}\label{l73}Let $G=(V,E)$ be an infinite, connected, locally finite graph. Let $p>0$, $u\in S\subset A$ and $B\cap S=\emptyset$. Then
\begin{itemize}
\item If $u\in S^{\circ}$
\begin{align*}
    \mathbb{P}_p(u\xleftrightarrow{A} B)\leq \sum_{y\in S:\partial_V y\cap S^c\neq \emptyset}
    \mathbb{P}_p(u\xleftrightarrow{S^{\circ}}\partial_V y )\mathbb{P}_p(y\xleftrightarrow{A} B)
\end{align*}
\item If $u\in S\setminus S^{\circ}$,
\begin{align*}
    \mathbb{P}_p(u\xleftrightarrow{A} B)\leq \sum_{y\in S:\partial_V y\cap S^c\neq \emptyset}
    \mathbf{1}_{y=u}\mathbb{P}_p(y\xleftrightarrow{A} B)
\end{align*}
\end{itemize}
\end{lemma}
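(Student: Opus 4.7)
The plan is to dispatch the two cases separately. When $u \in S \setminus S^{\circ}$, the vertex $u$ itself satisfies $u \in S$ and $\partial_V u \cap S^c \neq \emptyset$, so in the sum on the right-hand side the indicator $\mathbf{1}_{y=u}$ isolates a single nonzero term equal to $\mathbb{P}_p(u \xleftrightarrow{A} B)$; the asserted inequality then holds with equality, and nothing further is needed.

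For the substantive case $u \in S^{\circ}$, I will use a first-exit decomposition together with the BK (van den Berg--Kesten) inequality. On the event $\{u \xleftrightarrow{A} B\}$, fix any self-avoiding open path $\gamma = (v_0, v_1, \ldots, v_n)$ from $v_0 = u$ to some $v_n \in B$ contained in $A$, and let $j \geq 1$ be the smallest index with $v_j \notin S^{\circ}$. Such $j$ exists because $v_n \in B$ and $B \cap S = \emptyset$ force $v_n \notin S^{\circ}$. Setting $y := v_j$, the condition $v_{j-1} \in S^{\circ}$ forces every neighbor of $v_{j-1}$ to lie in $S$, so $y \in S$; combined with $y \notin S^{\circ}$ this gives $y \in S \setminus S^{\circ}$, and hence $\partial_V y \cap S^c \neq \emptyset$, so $y$ indexes a term on the right-hand side.

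The subpaths $\gamma_1 := (v_0, \ldots, v_{j-1})$ and $\gamma_2 := (v_j, \ldots, v_n)$ use disjoint vertex sets by the self-avoiding property of $\gamma$. The vertex set of $\gamma_1$ lies entirely in $S^{\circ}$, is all open, and joins $u$ to $v_{j-1} \in \partial_V y$, so it witnesses $\{u \xleftrightarrow{S^{\circ}} \partial_V y\}$; the vertex set of $\gamma_2$ lies in $A$, is all open, and joins $y$ to a vertex of $B$, witnessing $\{y \xleftrightarrow{A} B\}$. These two increasing events therefore occur disjointly in the BK sense, and
\begin{align*}
\{u \xleftrightarrow{A} B\} \subseteq \bigcup_{\substack{y \in S \\ \partial_V y \cap S^c \neq \emptyset}} \bigl\{u \xleftrightarrow{S^{\circ}} \partial_V y\bigr\} \circ \bigl\{y \xleftrightarrow{A} B\bigr\}.
\end{align*}
A union bound over $y$, followed by the BK inequality applied term-by-term---valid for increasing events on the product Bernoulli space underlying site percolation---then yields the claimed upper bound.

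The main subtlety is producing vertex-disjoint witnesses for the two factor events so that BK applies cleanly; this is secured by choosing $\gamma$ to be self-avoiding and taking $y$ to be the first vertex of $\gamma$ outside $S^{\circ}$. The verification that $y$ actually lands in $S \setminus S^{\circ}$ (rather than only outside $S^{\circ}$) uses the interior-vertex definition of $S^{\circ}$ applied to $v_{j-1}$. All remaining steps are routine.
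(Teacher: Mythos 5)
Your proof is correct and uses essentially the same argument as the paper: decompose an open path from $u$ to $B$ at its first exit from $S^{\circ}$, observe that the two resulting events occur disjointly, and apply the BK inequality term-by-term after a union bound over the first-exit vertex $y$. You spell out a few details the paper leaves implicit (self-avoidance of the chosen path, the check that $y\in S\setminus S^{\circ}$ so that it indexes a term of the sum, and the explicit invocation of BK for increasing events), but the underlying decomposition is identical.
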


\begin{proof}The conclusion is straightforward when $u\in S\setminus S^{\circ}$. It remains to prove the case when $u\in S^{\circ}$.
Let $u\in S^{\circ}$ and assume that the event $u\xleftrightarrow{A}B$ holds. Consider an open path $\{u_j\}_{0\leq j\leq K}$ from $u$ to $B$. Since $B\cap S=\emptyset$, one can find the least $k$ such that $u_{k+1}\notin S^{\circ}$.
Then the following events occur disjointly:
\begin{itemize}
    \item $u$ is connected to $\partial_V u_{k+1}$ by an open path in $S^{\circ}$;
    \item $u_{k+1}$ is connected to $B$ in $A$.
\end{itemize}
Then the lemma follows.
\end{proof}

\bigskip
\noindent\textbf{Proof of Lemma \ref{l71}(1).}
Let $p>\tilde{p}_c$. Define
\begin{align}
    f_v(p):=\mathbb{P}_p[v\leftrightarrow \Lambda^{c}]\label{dfv}
\end{align}
For any $\epsilon>0$, one can choose $\epsilon_1\in(0,\epsilon)$ and $p_1\in(\tilde{p}_c,p)$, such that
\begin{align}
\left(\frac{1-p}{1-p_1}\right)^{1-\epsilon_1}\leq \left(\frac{1-p}{1-\tilde{p}_c}\right)^{1-\epsilon}.\label{p1pc}
\end{align}
There exists $w\in V$, such that $\varphi_p^{w}(S)>1-\epsilon_1$ for all $p>p_1$. By Lemma \ref{l72},
\begin{align*}
    \frac{f_w'(p)}{1-f_w(p)}\geq \frac{1-\epsilon_1}{1-p},\ \forall p>p_1.
\end{align*}
Integrating both the left hand side and right hand side from $p_1>\tilde{p}_c$ to $p$, we infer that
\begin{align*}
\mathbb{P}_p(w\leftrightarrow\infty)\geq 1-\left(\frac{1-p}{1-p_1}\right)^{1-\epsilon_1}
\end{align*}
Then the lemma follows from (\ref{p1pc})
\hfill$\Box$

\bigskip
\noindent\textbf{Proof of Lemma \ref{l71}(2).} We shall use $p_c$ to denote $p_c^{site}(G)$ when there is no confusion. Note that Part (1) implies that $p_c\leq \tilde{p}_c$. If $p_c<\tilde{p}_c$, it suffices to show that there exists $p'\in (p_c,\tilde{p}_c)$ such that
\begin{align*}
   \mathbb{P}_{p'}(v\leftrightarrow\infty)=0,\ \forall v\in V;
\end{align*}
which contradicts the definition of $p_c$; then $p_c=\tilde{p}_c$ and (\ref{lbc2}) holds.

From the definition of $\tilde{p}_c$, we see that if $p_c<\tilde{p}_c$, there exists $p'\in (p_c,\tilde{p}_c)$ such that there exists $\epsilon_0>0$ for all $v\in V$, there exists a finite $S_v\subseteq V$ satisfying $v\in S_v^{\circ}$ and 
\begin{align*}
    \varphi_{p'}^v(S_v)\leq 1-\epsilon_0.
\end{align*}
By Lemma \ref{l73},
\begin{align*}
    \mathbb{P}_{p'}(v\leftrightarrow \infty)\leq \sum_{y\in S_v:\partial_V y\cap S_v^{c}\neq \emptyset}\mathbb{P}_{p'}(v\xleftrightarrow{S_v^{\circ}}\partial_V y)\mathbb{P}_{p'}(y\leftrightarrow\infty)
\end{align*}
Similarly, there exists a finite $S_y\subseteq V$ satisfying $y\in S_y^{\circ}$ and 
\begin{align*}
    \varphi_{p'}^y(S_y)\leq 1-\epsilon_0.
\end{align*}
Again by Lemma \ref{l73},
\begin{align*}
    \mathbb{P}_{p'}(y\leftrightarrow \infty)\leq \sum_{y_1\in S_y:\partial_V y_1\cap S_y^{c}\neq \emptyset}\mathbb{P}_{p'}(y\xleftrightarrow{S_v^{\circ}}\partial_V y_1)\mathbb{P}_{p'}(y_1\leftrightarrow\infty)
\end{align*}
Since the graph is locally finite, the process can continue for infinitely many steps. Hence we have
\begin{align*}
    \mathbb{P}_{p'}(v\leftrightarrow \infty)\leq\lim_{n\rightarrow\infty}(1-\epsilon_0)^n=0.
\end{align*}
Then the lemma follows.
\hfill$\Box$
\\
\bigskip
\\
\noindent\textbf{Proof of Theorem \ref{t03}.} By Lemma \ref{l04}, it suffices to show that whenever $p>p_{cut,V}'$, $p\geq p_c^{site}$.

By Definition \ref{df21}, if $p_1>p_{cut,V}'$, there exists $p_1>p_2>p_{cut,V}'$, such that for any $x\in V$, there exists $\epsilon>0$ ,
\begin{align*}
\inf_{\Pi_V}\sum_{v\in \Pi_V}\mathbb{P}_p[A(x,v,\Pi_V)]\geq \inf_{\Pi_V}\sum_{v\in \Pi_V}\mathbb{P}_{p_2}[A(x,v,\Pi_V)]>
\epsilon;\ \forall p>p_2.
\end{align*}
It follows that
\begin{align*}
\inf_{S:x\in S,|S|<\infty}\varphi_p^{x}(S)>\epsilon;\ \forall p>p_2.
\end{align*}
By Lemma \ref{l72}, we infer that
\begin{align*}
\mathbb{P}_p(x\leftrightarrow\infty)\geq 1-\left(\frac{1-p}{1-p_2}\right)^{\epsilon}>0.
\end{align*}
Then $p\geq p_c^{site}$, and the theorem follows.
$\hfill\Box$

\section{Edge Cut}\label{sect:ec}

In this section, we prove that the Lyons-Peres edge-cut characterization for $p_c^{site}$ does not hold all infinite connected, locally finite graphs by constructing a counterexample. The main theorem proved in this section is Theorem \ref{t34}.

\begin{definition}Suppose $G=(V,E)$ is an infinite, connected locally finite graph. An edge cutset $\Pi_E\subset E$ separating $x$ from $\infty$ is a subset of edges such that any singly infinite path starting from $x$ must occupy at least one edge in $\Pi_{E}$.

For an edge cutset $\Pi_E$ separating $x$ from $\infty$ and $e=(u,v)\in \Pi_E$, let $A(x,e,\Pi_E)$ denote the event that $x$ is connected to an endpoint of $e$ via an open path without using edges in $\Pi_E\setminus\{e\}$. Here we assume both $u$ and $v$ are open on $A(x,e,\Pi_E)$. Define
\begin{align}
p_{cut,E}'=p_{cut,E}'(G):=\sup\{p\geq 0:\inf_{\Pi_E}\sum_{e\in \Pi_E}\mathbb{P}_p(A(x,e,\Pi_E))=0\}.\label{dpe}
\end{align}
where the infimum is taken over all edge cutsets $\Pi_E$ separating $x$ from $\infty$.
\end{definition}

Note that the definition of $p'_{cut,E}$ above does not depend on the choice of the vertex $x\in V(G)$.

\begin{question}\label{q516}(Question 5.16 in \cite{LP16})Does $p_c(G)=p_{cut,E}'(G)$ hold for every locally finite, connected, infinite graph $G$?
\end{question}

\begin{lemma}\label{l23}Let $G=(V,E)$ be an infinite, connected locally finite graph.
\begin{align*}
p_{cut,E}'(G)\leq p_c^{site}(G).
\end{align*}
\end{lemma}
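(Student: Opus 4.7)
The plan is to mirror the proof of Lemma \ref{l04}: reduce the inequality $p_{cut,E}'\le p_c^{site}$ to the pointwise union bound
\[
\mathbb{P}_p(x\leftrightarrow\infty)\leq \sum_{e\in\Pi_E}\mathbb{P}_p(A(x,e,\Pi_E))
\]
valid for every edge cutset $\Pi_E$ separating $x$ from $\infty$, and then take the infimum over $\Pi_E$ and compare against the definitions of $p_{cut,E}'$ and $p_c^{site}$.

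To justify the pointwise bound, I would argue as follows. Fix $x\in V$ and assume the event $\{x\leftrightarrow\infty\}$ occurs; then there exists an infinite self-avoiding open path $\pi=(x=u_0,u_1,u_2,\dots)$ consisting of open vertices. Since $\Pi_E$ is a cutset and $\pi$ is a singly infinite path from $x$, at least one edge of $\pi$ lies in $\Pi_E$; let $e=\langle u_k,u_{k+1}\rangle$ be the first such edge along $\pi$. Then the initial segment $(u_0,\dots,u_k)$ is an open path from $x$ to the endpoint $u_k$ of $e$ that uses no edge of $\Pi_E$ (in particular no edge of $\Pi_E\setminus\{e\}$), and both endpoints $u_k$ and $u_{k+1}$ of $e$ are open because they are vertices of $\pi$. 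Hence $A(x,e,\Pi_E)$ occurs, which shows that $\{x\leftrightarrow\infty\}\subseteq\bigcup_{e\in\Pi_E}A(x,e,\Pi_E)$, and the union bound yields the displayed inequality.

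Taking the infimum over cutsets gives
\[
\mathbb{P}_p(x\leftrightarrow\infty)\leq \inf_{\Pi_E}\sum_{e\in\Pi_E}\mathbb{P}_p(A(x,e,\Pi_E)).
\]
If $p<p_{cut,E}'(G)$ then by the definition \eqref{dpe} the right-hand side equals $0$, so $\mathbb{P}_p(x\leftrightarrow\infty)=0$ and therefore $p\le p_c^{site}(G)$. Passing to the supremum over such $p$ yields $p_{cut,E}'(G)\le p_c^{site}(G)$, which is the lemma.

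There is no real obstacle here: the only subtlety is to make sure that the definition of $A(x,e,\Pi_E)$, which requires both endpoints of $e$ to be open, is compatible with picking $e$ as the first cutset edge along an infinite open path (the openness of both endpoints is automatic, since both lie on the open path). The argument is a direct edge analogue of Lemma \ref{l04} and does not use the locally finite hypothesis beyond what is needed to make the definitions meaningful.
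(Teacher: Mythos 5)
Your proof is correct and takes essentially the same approach as the paper: both rest on the pointwise union bound $\mathbb{P}_p(x\leftrightarrow\infty)\leq \sum_{e\in\Pi_E}\mathbb{P}_p(A(x,e,\Pi_E))$ and then compare with the definitions of $p_{cut,E}'$ and $p_c^{site}$. The only cosmetic difference is the direction of the implication (the paper argues $p>p_c^{site}\Rightarrow p\geq p_{cut,E}'$, you argue $p<p_{cut,E}'\Rightarrow p\leq p_c^{site}$, which additionally uses monotonicity in $p$ of $\inf_{\Pi_E}\sum_e\mathbb{P}_p(A(x,e,\Pi_E))$), and you helpfully spell out the justification of the union bound that the paper leaves implicit.
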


\begin{proof}It suffices to show that for any $p>p_c^{site}(G)$, $p\geq p_{cut,E}'(G)$. For each $v\in V$, and any edge cutset $\Pi_{E}$ separating $v$ from $\infty$,
\begin{align*}
\mathbb{P}_p(v\leftrightarrow \infty)
\leq \sum_{e\in \Pi_E}\mathbb{P}_p(A(v,e,\Pi_E))
\end{align*}
Since $\Pi_{E}$ is an arbitrary edge set separating $v$ from $\infty$, we infer that whenever $p>p_c^{site}(G)$, 
\begin{align*}
0<\mathbb{P}_p(v\leftrightarrow \infty)
\leq \inf_{\Pi_{E}}\sum_{e\in \Pi_E}\mathbb{P}_p(A(v,e,\Pi_E))
\end{align*}
It follows from (\ref{dpe}) that $p\geq p_{cut,E}'$.
\end{proof}

\begin{theorem}\label{t34}There exists a locally finite, connected, infinite graph $G$ for which $p_{cut,E}'(G)<p_c^{site}(G)$.
\end{theorem}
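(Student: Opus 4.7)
The plan is to build an explicit infinite, locally finite, connected graph $G$ with $p_c^{site}(G)=1$ but $p_{cut,E}'(G)\leq 1/\sqrt{2}$, which yields the strict inequality. Take $V(G)=\{v_n:n\geq 0\}\cup\{w_{n,i}:n\geq 0,\ 1\leq i\leq 2^n\}$, with edges $(v_n,w_{n,i})$ and $(w_{n,i},v_{n+1})$ only (no direct edge $v_nv_{n+1}$), so that $v_n$ and $v_{n+1}$ are joined by $2^n$ length-two detours through the degree-$2$ vertices $w_{n,i}$. Then $G$ is locally finite (with $\deg(v_n)=3\cdot 2^{n-1}$ for $n\geq 1$) but of unbounded degree. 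Since the only simple $v_j\to v_{j+1}$ path uses some $w_{j,i}$, every singly infinite path from $v_0$ must visit each $v_n$, so $\{v_0\leftrightarrow\infty\}\subseteq\bigcap_n\{v_n\text{ is open}\}$, an event of probability $0$ for $p<1$; hence $p_c^{site}(G)=1$.

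For the bound on $p_{cut,E}'$, fix any edge cutset $\Pi_E$ separating $v_0$ from $\infty$ and let $S$ be the (necessarily finite) component of $v_0$ in $G\setminus \Pi_E$. The series structure forces $m:=\max\{n:v_n\in S\}<\infty$, with $v_0,\ldots,v_m\in S$ and $v_{m+1}\notin S$. Two structural facts follow: \emph{(i)} for each $i\in\{1,\ldots,2^m\}$, at least one of the two level-$m$ edges of the $i$-th detour lies in $\Pi_E$ (otherwise $v_{m+1}$ would join $S$), yielding at least $2^m$ ``frontier edges'' in $\Pi_E$; \emph{(ii)} for each $j<m$, since $v_j,v_{j+1}\in S$ are in the same component of $G\setminus \Pi_E$ and the only simple $v_j\to v_{j+1}$ path in $G$ uses a single detour, there is an index $i_j^*$ with both level-$j$ edges of the $i_j^*$-th detour outside $\Pi_E$.

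The canonical path $P^{*}:=v_0\,w_{0,i_0^{*}}\,v_1\,w_{1,i_1^{*}}\,\cdots\,v_m$ thus consists entirely of edges outside $\Pi_E$, so also outside $\Pi_E\setminus\{e\}$ for any $e$. For each frontier edge $e\in\Pi_E$, requiring every vertex of $P^{*}$, together with both endpoints of $e$ (with a one-edge extension through $w_{m,i}$ when $e=(w_{m,i},v_{m+1})$ and $(v_m,w_{m,i})\notin\Pi_E$), to be open witnesses the event $A(v_0,e,\Pi_E)$. A short case analysis (including the worst case in which both edges of a single detour lie in $\Pi_E$) then gives
\[
\sum_{e\in\Pi_E}\mathbb{P}_p(A(v_0,e,\Pi_E))\geq 2^m\cdot p^{2m+3}=p^3(2p^2)^m.
\]
For every $p>1/\sqrt{2}$ one has $2p^2>1$, so this is uniformly $\geq p^3>0$ across all cutsets, giving $p_{cut,E}'(G)\leq 1/\sqrt{2}<1=p_c^{site}(G)$.

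The main obstacle is that $\Pi_E$ is completely arbitrary, not a ``level cut'': it may be non-minimal, with extra edges spread over many detours and levels, potentially blocking intermediate detours. This is precisely what the two structural facts (i) and (ii) circumvent: (i) forces any cutset to contain at least $2^m$ frontier edges, and (ii) guarantees the existence of an intact canonical path through the earlier layers on which to carry out the per-edge lower bound.
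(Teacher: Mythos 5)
Your proof constructs the same graph as the paper (black vertices $v_n$ in series, with $2^n$ parallel length-two detours between $v_n$ and $v_{n+1}$), establishes $p_c^{site}(G)=1$ by the same observation that an infinite path must visit every $v_n$, and bounds $p_{cut,E}'(G)\leq 1/\sqrt 2$ by the same counting argument: any cutset isolating a finite cluster $S$ containing $v_0,\ldots,v_m$ but not $v_{m+1}$ must hit each of the $2^m$ level-$m$ detours, and each such frontier edge is reachable along an intact path of bounded length, yielding $\sum_e\mathbb{P}_p(A(v_0,e,\Pi_E))\gtrsim 2^m p^{2m+O(1)}$. This is essentially identical to the paper's argument, with only cosmetic differences (the paper picks out $N=m+1$ as the first separated level and builds a set $\tilde F_N$ of exactly $2^{N-1}$ representative edges; you phrase it via the component $S$ and an explicit canonical path $P^*$), and both reach the same threshold $1/\sqrt 2$.
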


\begin{proof}
We shall prove the theorem by constructing a locally finite, connected, infinite graph $G$ for which $p_{cut,E}'(G)<p_c^{site}(G)$.

\begin{figure}
\includegraphics{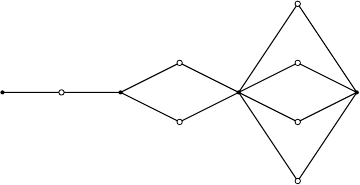}
\caption{A graph with $p_{cut,E}'(G)<p_c^{site}(G)$}
\label{fig:ecc}
\end{figure}

Let $G$ be a bipartite graph whose black vertices are located at points $(2n,0)$ $n\in \NN=\{0,1,2,\ldots\}$ in the plane. Its white vertices have coordinates $(2n+1,y_{n,i})$ for $n\in \NN$ and $1\leq i\leq 2^n$; such that
\begin{itemize}
\item  for each $n\in \NN$ and $1\leq i\leq 2^n$, $(2n,0)$ is joined to $(2n+1,y_{n,i})$ by an edge; and 
\item  for each $n\in \NN$ and $1\leq i\leq 2^n$, $(2n+2,0)$ is joined to $(2n+1,y_{n,i})$ by an edge.
\item $y_{n,1}<y_{n,2}<\ldots<y_{n,2^n}$.
\end{itemize}
See Figure \ref{fig:ecc}.

One can see that $p_c^{site}(G)=1$. However, for each edge cut set $\Pi_E$ separating $(0,0)$ from $\infty$, let
\begin{align}
N:=N(\Pi_E):=\min\{n\geq 1:\Pi_{E}\ \mathrm{separates}\ (0,0)\ \mathrm{from}\  (2n,0)\}.\label{dfN}
\end{align}
Let $x=(0,0)$. Then 
\begin{itemize}
\item Let $E_N$ be the set of all $2^N$ edges between $(2(N-1),0)$ and $(2N,0)$, let $F_N=E_N\cap \Pi_E$, then $F_N$ contains at least $2^{N-1}$ edges. To see why that is true, from (\ref{dfN}) we see that $\Pi_{E}$ separates $(2N-2,0)$ and $(2N,0)$. Each self-avoiding path in $G$ joining $(2N-2,0)$ and $(2N,0)$ must use one set of the following $2^{N-1}$ sets of edges
\begin{align}
Q_i:=\{((2N-2,0),(2N-1,y_{N-1,i})),((2N-1,y_{N-1,i}),(2N,0))\};\ 1\leq i\leq 2^{N-1}.\label{pts}
\end{align}
Hence in order to separate $(2N_2,0)$ and $(2N,0)$, $\Pi_{E}$ must contain at least one edge in each set of the $2^{N-1}$ sets in (\ref{pts}).
\item If for some $1\leq i\leq 2^{N-1}$, $F_N$ contains two edges in $Q_i$, remove the edge $(2N-1,y_{N-1,i}),(2N,0))$ from $F_N$. This way we obtain a new set of edges $\tilde{F}_N$ with exactly $2^{N-1}$ edges in it.

For each $e=(z_m,z_{m+1})\in \tilde{F}_N$, let $m$ and $m+1$ be the horizontal coordinates of $z_m$ and $z_{m+1}$, respectively. Then $m\in\{2N-2,2N-1\}$. From (\ref{dfN}) and the construction of $\tilde{F}_N$ we see that there exists a path of length at least $m$ joining $x$ and $z_m$ without using edges in $\Pi_{e}\setminus\{e\}$. It follows that
\begin{align*}
\mathbb{P}_p(A(x,e,\Pi_E))\geq p^{2N-1}
\end{align*}
\end{itemize}
Hence we have 
\begin{align*}
\sum_{e\in \Pi_E}\mathbb{P}_p(A(x,e,\Pi_E))\geq 
\sum_{e\in \tilde{F}_N}\mathbb{P}_p(A(x,e,\Pi_E))\geq
2^{N-1}p^{2N-1}
\end{align*}
which goes to infinity as $N\rightarrow\infty$ and $p>\frac{\sqrt{2}}{2}$. We infer that
\begin{align*}
\sup\left\{p\geq 0:\inf_{\Pi_E}\sum_{e\in \Pi_E}\mathbb{P}_p(A(x,e,\Pi_E))=0\right\}\leq \frac{\sqrt{2}}{2}.
\end{align*}
It follows that $p_{cut,E}'(G)<p_c^{site}(G)$.
\end{proof}
\bigskip

\noindent\textbf{Acknowledgements.} The author would like to express gratitude to Russell Lyons for highlighting Paper \cite{pt23}. The author thanks the anonymous reviewers for their careful reading of the paper and for their helpful suggestions to improve its readability.

\bibliography{percLP}
\bibliographystyle{plain}
\end{document}